\newtheorem{theorem}{Theorem}[section]
\newtheorem{proposition}[theorem]{Proposition}
\newtheorem{lemma}[theorem]{Lemma}
\newtheorem{corollary}[theorem]{Corollary}
\theoremstyle{definition}
\newtheorem{definition}[theorem]{Definition}
\theoremstyle{remark}
\newtheorem{remark}[theorem]{Remark}
\newtheorem{example}[theorem]{Example}
\newcommand{\Mat}{\operatorname{Mat}}
\newcommand{\rank}{\operatorname{rank}}
\newcommand{\codim}{\operatorname{codim}}
\newcommand{\D}{\operatorname{d}\!}
\newcommand{\CC}{\mathbb{C}}
\newcommand{\NN}{\mathbb{N}}
\newcommand{\PP}{\mathbb{P}}
\newcommand{\OO}{\mathcal{O}}
\title{Bouquet decomposition for Determinantal Milnor fibers}
\author{Matthias Zach, Johannes Gutenberg Universit\"at Mainz}
\address{Institut f\"ur Mathematik \\
FB 08 - Physik, Mathematik und Informatik \\
Johannes Gutenberg-Universit\"at \\
Staudingerweg 9, 4. OG \\
55128 Mainz, Germany}
\begin{document}

\maketitle

\section{Results}

In this note we will apply a general Bouquet Decomposition Theorem 
by M. Tib{\u a}r \cite{Tibar95} in the case of an Essentially Isolated 
Determinantal Singularity (EIDS, see \cite{EbelingGuseinZade09}) 
to prove the following:

\begin{theorem}
  Let $(X_0,0) = (A^{-1}(M_{m,n}^t),0) \subset (\CC^N,0)$ be an EIDS 
  of type $(m,n,t)$ given by a holomorphic map germ 
  \[
    A \colon (\CC^N,0) \to (\Mat(m,n;\CC),0),
  \]
  $A_u$ a stabilization of $A$, 
  and $\overline X_u = A_u^{-1}(M_{m,n}^t)$ the determinantal 
  Milnor fiber. 
  Define 
  \[
    s_0 := \min \{ s \in \NN : (m-s+1)(n-s+1) \leq N \}.
  \]
  Then $\overline X_u$ has a bouquet decomposition as 
  \begin{equation}
    \overline X_u \cong_{ht} L_{m,n}^{t,m\cdot n -N} \vee 
    \bigvee_{s_0 \leq s \leq t} \bigvee_{i=1}^{r(s)} S^{N-(m-s+1)(n-s+1)+1}( 
      L_{m-s+1,n-s+1}^{t-s,1}
      )
    \label{eqn:BouquetDecomposition}
  \end{equation}
  where $L_{m,n}^{t,k}$ is the intersection 
  \[
    L_{m,n}^{t,k} := H_k \cap M_{m,n}^t
  \]
  of a codimension $k$ hyperplane $H_k$ in general position off the origin 
  with the generic determinantal variety
  \[
    M_{m,n}^t = \{ M \in \Mat(m,n;\CC) : \rank M < t \}.
  \]
  \label{thm:MainTheorem}
\end{theorem}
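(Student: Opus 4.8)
The plan is to use the rank stratification of $M_{m,n}^t$ as the organizing structure and to feed the resulting data into the Bouquet Decomposition Theorem of Tib{\u a}r \cite{Tibar95}. Since $A_u$ is a stabilization of $A$, it is transverse to every rank stratum $M_{m,n}^s \setminus M_{m,n}^{s-1}$ of $M_{m,n}^t$; pulling this stratification back endows $\overline X_u$ with a Whitney stratification whose strata $V_s := A_u^{-1}(M_{m,n}^s \setminus M_{m,n}^{s-1})$ are smooth of dimension $N - (m-s+1)(n-s+1)$. By the definition of $s_0$ the stratum $V_s$ is nonempty precisely for $s_0 \le s \le t$: the top stratum $V_t$ is the smooth locus of $\overline X_u$, of dimension $d := N - (m-t+1)(n-t+1)$, while the deeper strata record the essentially isolated determinantal singularities surviving in the Milnor fibre.

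The leading term is pinned down by comparison with a linear model. If $A_u$ were replaced by a generic linear map $L \colon \CC^N \to \Mat(m,n;\CC)$, then $L^{-1}(M_{m,n}^t)$ would be exactly the codimension $mn-N$ generic section $L_{m,n}^{t,mn-N}$ of $M_{m,n}^t$, which has the right dimension $d$. The stabilization $A_u$ differs from such a transverse linear model precisely by vanishing cycles concentrated at the finitely many essential stratified points on the $V_s$, and it is these that Tib{\u a}r's theorem organizes into a wedge. Concretely I would slice by a generic linear form $\ell$ on $\CC^N$ (equivalently, work with the stabilization parameter as the function of \cite{Tibar95}): the generic level $\overline X_u \cap \{\ell = c\}$ is the determinantal Milnor fibre of $A_u$ restricted to a generic hyperplane, and Tib{\u a}r's theorem presents $\overline X_u$, up to homotopy, as the wedge of the generic section $L_{m,n}^{t,mn-N}$ with the local Milnor data carried by the stratified critical points of $\ell$ on the strata.

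The core of the argument is the local contribution at a stratum $V_s$. By transversality of $A_u$ the germ of $\overline X_u$ along $V_s$ is biholomorphic to the product of the smooth stratum directions with the transverse determinantal slice of type $M_{m-s+1,n-s+1}^{t-s+1}$. At each essential stratified point the associated vanishing cycle is a join of a Morse sphere from the smooth directions with a determinantal link coming from the transverse slice, which I expect to identify as the suspension $S^{N-(m-s+1)(n-s+1)+1}\!\left(L_{m-s+1,n-s+1}^{t-s,1}\right)$, the number of such points being $r(s)$. For $s=t$ the transverse link is empty and the suspension collapses to a top sphere $S^d$, recovering the cells of middle dimension. Because every local contribution is a suspension, its attaching map is null-homotopic, so Tib{\u a}r's theorem upgrades the iterated cell attachments to the genuine wedge \eqref{eqn:BouquetDecomposition}.

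The hard part will be the precise local analysis, rather than the invocation of \cite{Tibar95}. On the analytic side one must verify that $\ell$ together with the rank stratification satisfies the stratified isolated-singularity and polar-genericity hypotheses of Tib{\u a}r's theorem, which requires controlling the stratified critical locus of $\ell$ inside the Milnor ball. On the combinatorial side the genuine subtlety is that the rank index appearing in the link is $t-s$, one below the rank index $t-s+1$ of the naive transverse type $M_{m-s+1,n-s+1}^{t-s+1}$: the determinantal link $L_{m-s+1,n-s+1}^{t-s,1}$ is the complex link of the \emph{singular locus} of the transverse slice, so extracting it — together with the exact suspension order $N-(m-s+1)(n-s+1)+1$ — forces a recursive application of the statement to the smaller determinantal germ given by the transverse slice, and an identification of $r(s)$ with the count of essential stratified points on $V_s$. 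I expect this transverse/recursive identification to be the main obstacle.
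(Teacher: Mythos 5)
Your proposal assembles the correct ingredients (rank stratification, normal Morse data given by determinantal links, null-homotopic attaching maps via \cite{Tibar95}), but the mechanism by which you invoke the Handlebody Theorem is exactly the one the paper rules out, and this is a genuine gap. Theorem \ref{thm:HandlebodyTheorem} compares the Milnor fiber of a function germ with isolated stratified singularity on a space germ with the complex link \emph{of that ambient germ}. Slicing $\overline X_u$ by a generic linear form $\ell$ on $\CC^N$ is the classical hyperplane-section induction, which, as the introduction of the paper stresses, only controls the Euler characteristic: one cannot locate the attachment loci on $\overline X_u \cap \{\ell = 0\}$, so no homotopy-type statement comes out. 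Your alternative, taking the stabilization parameter $u$ as the function, fares no better: the total space $(X,0)$ of the essential smoothing carries no controlled Whitney stratification (transversality of $\mathbf A$ holds only for $u \neq 0$), and even granting the hypotheses, the theorem would express $\overline X_u$ in terms of the complex link of $(X,0)$, a space you never identify and which is not $L^{t,m\cdot n - N}_{m,n}$ in general. Your claim that $A_u$ ``differs from a transverse linear model precisely by vanishing cycles concentrated at finitely many points'' is the conclusion, not an available input. The paper's actual route is the standard transformation of Section \ref{sec:TheStandardTransformation}: the full $m\cdot n$-parameter deformation $A(x) - \mathbf Y$ is packaged into $(Z,0) \cong (M^t_{m,n},0)\times(\CC^N,0)$, $\overline X_u$ is realized as the Milnor fiber $F_1$ of $f_1$ on $(Z_1,0)$, and then one runs an $m\cdot n$-step induction trading one defining equation $f_{k+1}$ for one generic hyperplane equation $h_k$ at a time; the engine is Lemma \ref{lem:IdentificationLinkMilnorFiber}, the homeomorphism $L_k \cong F_{k+1}$, proved by parallel transport in the L\^e fibration $\Phi = (h_k, f_{k+1})$ as in (\ref{eqn:LeFibration}). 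The generic section $L^{t,m\cdot n - N}_{m,n}$ emerges only after \emph{all} steps, and $r(s) = \sum_k r_k(s)$ aggregates Morse points over every carrousel stage — so your identification of $r(s)$ with a count of special points on a single slice of $\overline X_u$ is also not what the argument produces.

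Conversely, the step you single out as ``the main obstacle'' — extracting the link and the suspension order at a stratified Morse point — is the easy, purely local part in the paper, and your proposed resolution is wrong in mechanism. Normal Morse data along a stratum is the cone over the complex link of the transverse slice \emph{itself}, never of its singular locus; the suspension arises because total Morse data is the product of tangential and normal data, and the pair $(D^k,\partial D^k)\times(C(L),L)$ is the cone over the join $S^{k-1} * L \cong S^k(L)$. This is precisely Proposition \ref{prp:ThimbleForDeterminantalSingularities}, applied in one stroke, with no recursive invocation of the main theorem on the smaller determinantal germ. The index appearing in the theorem is inherited from the local product formula (\ref{eqn:LocalProductGenericDeterminantalVariety}); you are right that the naive Schur-complement computation at a point of rank $s-1$ yields transverse type $M^{t-s+1}_{m-s+1,n-s+1}$, and the tension you observe with the superscript $t-s$ concerns the bookkeeping in (\ref{eqn:LocalProductGenericDeterminantalVariety}) itself, not a phenomenon explained by passing to the singular locus. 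In short, your write-up misplaces the difficulty: the thimble identification is a short product computation, while the genuinely hard content your proposal lacks is the standard transformation together with the link-to-Milnor-fiber exchange $L_k \cong F_{k+1}$, which is what allows the Handlebody Theorem to be iterated with controlled, null-homotopic attachments all the way down to $L^{t,m\cdot n - N}_{m,n}$.
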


In the formula (\ref{eqn:BouquetDecomposition}) we denote by 
$S^r(X)$ the $r$-fold repeated suspension of a topological 
space $X$. 
We use the same convention as in \cite{Tibar95} and set 
  $S^1(\emptyset) = S^0$,
the sphere of dimension $0$, and 
  $S^0(X) = X$
for any $X$. 

Theorem \ref{thm:MainTheorem} is a major reduction step in the understanding 
of the vanishing topology of essentially isolated determinantal singularities. 
For isolated complete intersection singularities (ICIS) on a smooth ambient 
space $(\CC^N,0)$, it was already known 
since \cite{Hamm72} that the Milnor fiber is homotopy equivalent to a bouquet 
of spheres of the same dimension. This is not necessarily the case for 
EIDS, see for example \cite{DamonPike14}. Several groups have studied their 
vanishing Euler characteristic, see e.g. \cite{EbelingGuseinZade09}, 
\cite{GaffneyNivaldoRuas16}, and \cite{BallesterosOreficeTomazella13}. 
One approach is to study the behaviour of a generic hyperplane 
equation $h$ in a determinantal deformation of a given EIDS $(X_0,0)$. 
The determinantal Milnor fiber $\overline X_u$ is then obtained from 
its hyperplane section $\overline X_u \cap \{ h= 0\}$ by attaching 
cells, or more generally in the context of stratified Morse theory
so-called ``thimbles\footnote{By a thimble we mean the pair of topological 
spaces given by the product of the tangential and the normal Morse data at a given 
critical point. This might differ from the cell $(D^\lambda, \partial D^\lambda)$
occuring in classical Morse theory, see \cite{GoreskyMacPherson88}.}'', 
at Morse critical points 
of $h$ on $\overline X_u$. This way one obtains nice formulas for the vanishing 
Euler characteristic in terms of the polar multiplicities of the singularity 
$(X_0,0)$. However, it is hardly possible to describe the loci in 
the hyperplane section $\overline X_u \cap \{h = 0\}$, at which the attachments 
take place. This fact destroys any hope to arrive at a precise description of the 
homotopy type of $\overline X_u$. 

It is the Carrousel by L\^e, which sits at the heart of the proof of the 
Handlebody Theorem (stated as Theorem \ref{thm:HandlebodyTheorem} below)
from \cite{Tibar95}, and 
enables us to understand the attachments of the thimbles. As we will see, however, 
the setup for the application of the Handlebody Theorem is quite different 
from the viewpoint of EIDS. We will describe the transformation of any EIDS
$(X_0,0) = (A^{-1}(M_{m,n}^t) \subset (\CC^N,0)$
to an ICIS 
\[
  (X_0,0) = (\{f_{1,1}= \cdots = f_{m,n} = 0\},0) \subset (Z,0)
\]
on a controlled Whitney stratified ambient space 
\[
  (Z,0) \cong (M_{m,n}^t,0) \times (\CC^N,0)
\]
in Section \ref{sec:TheStandardTransformation}. 
Then, rather than doing an induction argument by cutting down with generic 
hyperplanes, we proceed by an inductive argument, where we always trade 
one equation $f_{i,j}$ defining $(X_0,0)$ in $(Z,0)$ for a generic hyperplane
equation and eventually end up with the space $L_{m,n}^{t,m\cdot n -N}$ -
a generic linear section of $M_{m,n}^t$.
During this process, the Handlebody Theorem allows us to really keep track 
of the involved attachment processes. 

The homotopy type of the spaces $L_{m,n}^{t,k}$ has been studied in a 
few particular cases, see e.g. \cite{FKZ15}. The Euler obstructions of the 
generic determinantal varieties $M_{m,n}^t$, which are closely related to 
their hyperplane sections $L_{m,n}^{t,1}$, can be found in \cite{GaffneyNivaldoRuas16} 
and the Chern-Schwartz-MacPherson classes of their projectivizations 
$\PP(M_{m,n}^t)$ have been studied in 
\cite{Zhang16}. However, there is - at least to the 
knowledge of the author - no complete understanding of the $L_{m,n}^{t,k}$
for arbitrary values of $m,n,t$, and $k$. 
%The description 
%of their contribution to the vanishing cycles for arbitrary EIDS given in 
%this note yields further interest in the studying them.

\medskip

I would like to thank in chronological order: J.J. Nu\~no Ballesteros for 
his kind hospitality during a stay in Valencia, where I first heard of the 
Carrousel, D. van Straten for conversations leading to the study of the work 
by M. Tib{\u a}r, H. Hamm for his encouraging interest in the subject and 
discussion during a workshop on determinantal singularities in Hannover, 
and finally M. Tib{\u a}r himself for an exchange of letters on how to 
use his results for ICIS.

\section{Preliminaries}

\subsection{Notations and Background}

In this article we will make use of the common terms of stratified Morse 
theory. The reader may consult the standard textbook reference 
\cite{GoreskyMacPherson88}. Given any space $Z$ with 
a Whitney stratification $\Sigma = (S_{\alpha})_{\alpha \in A}$ and a point 
$p \in Z$ we will write 
\[
  T_p Z := T_p S_\alpha
\]
for the tangent space of $S_\alpha$ at $p$
where $S_\alpha$ is the stratum containing $p$. 

Suppose $Z \subset N$ is embedded in a smooth manifold $N$. We say that 
a smooth map 
\[
  f \colon M \to N \supset Z
\]
is transverse to $Z$ in a point $p\in M$, if 
\begin{itemize}
  \item either $\dim M + \dim Z < \dim N$ and $f(p) \notin Z$, 
  \item or $f(p) \in Z$ and $\D f(p) T_p M + T_{f(p)}Z = T_{f(p)} N$.
\end{itemize}
The map $f$ is transverse to $Z$ in a set $U \subset M$, if it is transverse 
to $Z$ at every point $p\in U$.

Consider the set $X = f^{-1}(Z)$. It is naturally stratified by the strata 
$\Sigma_\alpha = f^{-1}(S_\alpha)$ given by the preimage of the strata of $Z$.
Whenever $f \colon M \to N \supset Z$ is transverse to $Z$ in $M$, the 
$\Sigma_{\alpha}$ are a Whitney stratification for $X$ and we also say 
that $X$ \textit{inherits} the stratification of $Z$. 
In particular, this applies to the case of a closed embedding such as 
for example the fiber of a stratified submersion on $Z$ induced from a 
map on $N$.

For a matrix $A \in \Mat(m,n;R)$ with entries in a commutative ring $R$ 
we will denote by $\langle A \rangle$ the ideal in $R$ generated by its entries. 
For $0< t \leq \min\{m,n\}$ we will write 
\[
  A^{\wedge t} \colon \bigwedge^t R^m \to \bigwedge^t R^n
\]
for the map induced by $A \colon R^m \to R^n$ on the $t$-th exterior 
products. Then $\langle A^{\wedge t} \rangle$ is the ideal generated by the $t$-minors 
of $A$.

On the topological side let us emphasize that we usually consider 
\textit{closed} Milnor balls $B$ for singularities. This convention 
always assures that one automatically keeps track of the boundary behaviour in 
deformations, which can be a particularly tricky task in the setting of 
non-isolated singularities. Moreover, the resulting Milnor fibers are always 
\textit{compact} stratified spaces, which simplifies their treatment by
Morse theory.

Since this note is merely an application of methods which had been developed 
before, we will restrict ourselves to the description of how the techniques 
can be used on determinantal singularities. To this end, we will describe the 
cornerstones of the proofs of e.g. the Handlebody Theorem by Tib{\u a}r and 
other ideas behind it. However, the reader who is unfamiliar with the mathematical 
rigor on singularity theory on Whitney stratified spaces is strongly encouraged 
to consult the articles \cite{Tibar95}, \cite{Le77}, and the references given 
there, and the standard textbook on stratified Morse theory 
\cite{GoreskyMacPherson88}.

\subsection{Essentially Isolated Determinantal Singularities}
  
Let $(M_{m,n}^t,0) \subset (\Mat(m,n;\CC),0)$ be the generic determinantal 
variety of type $(m,n,t)$:
\[
  M_{m,n}^t = \{ M \in \Mat(m,n;\CC): \rank M < t \}.
\]
The canonical rank stratification 
by 
\[
  S_{m,n}^s = M_{m,n}^s \setminus M_{m,n}^{s-1}
\]
for $0 \leq s \leq \min\{m,n\}$ is a Whitney 
stratification of $\Mat(m,n;\CC)$ and $M_{m,n}^t$. This can easily be deduced 
by induction from the observation that at any point $p \in S_{m,n}^s$ one has 
a product 
\begin{equation}
  (M_{m,n}^t,p) \cong (M_{m-s+1,n-s+1}^{t-s},0) \times (\CC^{(m+n)\cdot (s-1) - (s-1)^2},0)
  \label{eqn:LocalProductGenericDeterminantalVariety}
\end{equation}
of analytic spaces. 
We will denote by 
\begin{equation}
  L^{t,k}_{m,n}
  \label{eqn:DefinitionL-thComplexLink}
\end{equation}
the $k$-th complex link of $M_{m,n}^t$, 
that is the interior of the Milnor fiber of the complete intersection morphism 
given by a generic linear map
\[
  \Phi_k : (\Mat(m,n;\CC),0) \to (\CC^k,0).
\]
The usual complex link is then just $L^{t,1}_{m,n}$. Note that, since the 
generic determinantal variety $M_{m,n}^t$ has a homogeneous singularity at the 
origin, we may choose the Milnor ball to be of arbitrary - even infinite - size. 
Thus, up to homeomorphism 
this definition agrees with the one given in the statement of Theorem 
\ref{thm:MainTheorem}

The complex link plays a central role in the stratified Morse theory 
on complex analytic varieties, since it forms the ``normal Morse data'', 
see \cite{GoreskyMacPherson88}. In the case of the generic determinantal 
variety $M_{m,n}^t$ we find from (\ref{eqn:LocalProductGenericDeterminantalVariety}) 
that the normal Morse data along the stratum $S_{m,n}^s$ 
for $s\leq t$ is given by the pair of spaces
\begin{equation}
  \left( C(L_{m-s+1,n-s+1}^{t-s,1}), L_{m-s+1,n-s+1}^{t-s,1} \right),
  \label{eqn:NormalMorseDataGenericDeterminantalVariety}
\end{equation}
where $C(X)$ denotes the real cone over a given topological space $X$.

\begin{definition}[\cite{EbelingGuseinZade09}]
  An EIDS $(X_0,0) \subset (\CC^N,0)$ of type $(m,n,t)$ is given as 
  $(X_0,0) = (A^{-1}(M_{m,n}^t),0)$, where $A : (\CC^N,0) \to (\Mat(m,n;\CC),0)$ 
  is a holomorphic map germ, for which $A$ is 
  transversal to the rank stratification of $\Mat(m,n;\CC)$
  in a punctured neighborhood of the origin 
  and $\codim (X,0) = \codim M_{m,n}^t = (m-t+1)(n-t+1)$. 
\end{definition}

It follows directly from the definition of transversality that away 
from the origin also 
$X_0$ inherits a canonical stratification by the strata 
\[
  \Sigma^s := A^{-1}(S_{m,n}^s).
\]
Counting dimensions yields that 
these strata are nonempty if and only if  
\begin{equation}
  \min \{ r \in \NN : (m-r+1)(n-r+1) \leq N \} < s \leq t.
  \label{eqn:RangeOfNonemptyStrata}
\end{equation}

An ``essential smoothing'' of $(X_0,0)$ is a family 
\[
  \xymatrix{
    X_0 \ar@{^{(}->}[r] \ar[d] &
    X \ar[d]_u \\
    \{0 \} \ar[r] & 
    \CC
  }
\]
coming from a stabilization 
\[
  \mathbf A : (\CC^{N},0) \times (\CC,0) \to (\Mat(m,n;\CC),0) \times (\CC,0)
\]
of the map $A$. That is $\mathbf A = \mathbf A(x,u) = (A_u(x),u)$ with 
$A_0 = A$ 
and $A_u$ is transversal to $M_{m,n}^t$ for all $u\neq 0$ in a sufficiently small.
Then, the total space of the family above appears as 
$X = \mathbf A^{-1}(M_{m,n}^t\times \CC)$ 
and $u$ is map given by the deformation parameter. 

From a stabilization we can construct the \textit{determinantal Milnor fiber} 
as follows. Choose a representative 
\[
  \mathbf A : W \times U \to \Mat(m,n;\CC) \times U
\]
of the stabilization $\mathbf A$ for some open sets 
$W\subset \CC^N$ and $U \subset \CC$ and let $B \subset \CC^N$ be a Milnor ball 
for $(X_0,0)$ in $W$. By this we mean a closed ball around the origin such that 
$\overline X_0 := X_0 \cap B$ is closed, the boundary $\partial B$ intersects 
$X_0$ transversally, and 
\[
  \overline X_0 \cong C(\partial \overline X_0)
\]
is homeomorphic to the real cone over its boundary 
$\partial \overline X_0 = \partial B \cap X_0$. 
We can then consider the family $u : X \cap (B \times U) \to U$. 
It may be deduced from Thom's first Isotopy Lemma that 
$u$ is a trivial topological fibration along the boundary 
$X \cap \partial B\times U$ over $U$ and that 
\[
  u : (X \cap (B \times U)) \setminus \overline X_0 \to U\setminus\{0\}
\]
is a topological fiber bundle for $U$ small enough.

\begin{definition}
  \label{def:DeterminantalMilnorFiber}
  It is the fiber of this bundle 
  \[
    \overline X_u \cong A_u^{-1}(M_{m,n}^t) \cap B 
  \]
  that we call the determinantal Milnor fiber. 
\end{definition}

Using the theory of versal unfoldings, one can show that 
in fact for any given EIDS $(X_0,0)$ the determinantal Milnor fiber 
is unique up to homeomorphism, see \cite{Pereira10} or \cite{Zach17}.

\begin{example}
  Consider the EIDS $(X_0,0) \subset (\CC^5,0)$ of type $(2,3,2)$ given by the matrix
  \[
    A = 
    \begin{pmatrix}
      x & y & z \\
      v & w & x
    \end{pmatrix}
  \]
  together with the essential smoothing induced by the perturbation with
  \[
    \begin{pmatrix}
      u & 0 & 0 \\
      0 & 0 & -u 
    \end{pmatrix}.
  \]
  It is easily seen that indeed the total space $(X,0) \subset (\CC^{5+1},0)$ 
  is isomorphic to the generic determinantal variety 
  $M_{2,3}^2 \subset \Mat(2,3;\CC) \cong \CC^6$ and the map $u$ is 
  a generic linear form on it. Hence, the determinantal 
  Milnor fiber of $(X_0,0)$ is nothing but the (closure of the) 
  complex link $L_{2,3}^{2,1}$ of 
  $(M_{2,3}^2,0)$. It is known that $L_{2,3}^{2,1}$ is homotopy equivalent 
  to the $2$-sphere $S^2$, see \cite{FKZ15}.
\end{example}

Browsing through the classification of simple isolated Cohen-Macaulay 
codimension $2$ singularities from \cite{FruehbisKruegerNeumer10}, 
one sees that similar results hold for the first entries of the lists 
for the different dimensions $d=0,\dots,3$. For $d<3$, the determinantal 
Milnor fibers are, however, hyperplane sections of the link 
$L_{2,3}^{2,1}$. 
This explains our interest in the $k$-th links of the generic determinantal 
singularities rather than just their usual links.

\subsection{The Handlebody Theorem}
In \cite{Tibar95}, M. Tib{\u a}r proofs the following theorem 
for the Milnor fiber $F$ of an isolated hypersurface singularity 
\[ 
  f \colon (Z,0) \to (\CC,0)
\]
on a complex analytic, Whitney stratified space $(Z,0)$ and 
the complex link $L$ of $(Z,0)$:

\begin{theorem}[\cite{Tibar95}, Handlebody Theorem]
  The Milnor fiber $F$ is obtained from the complex link $L$ 
  to which one attaches cones over local Milnor fibres of stratified 
  Morse singularities. The image of each such attaching map retracts 
  within $L$ to a point.
  \label{thm:HandlebodyTheorem}
\end{theorem}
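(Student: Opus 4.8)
The plan is to realise both spaces $F$ and $L$ as fibres of a single bifurcation map $\Phi = (\ell, f) \colon Z \cap B \to \CC^2$, where $\ell$ is a linear form chosen generic with respect to the Whitney stratification of $Z$, and then to pass from one fibre to the other by a Morse-theoretic sweep governed by the relative polar curve. Writing $(u,v)$ for the coordinates on the target, the complex link appears as the slice $L = \Phi^{-1}(\{u = \eta\}) \cap B$ over a vertical line and the Milnor fibre as $F = \Phi^{-1}(\{v = \delta\}) \cap B$ over a horizontal line, with $0 < \eta, \delta \ll \epsilon$. The first step is to introduce the relative polar curve $\Gamma$, the closure of the stratified critical locus of $\Phi$, and its image $\Delta = \Phi(\Gamma) \subset \CC^2$, the discriminant; for $\ell$ generic this is a reduced plane curve meeting the two axes in the expected polar multiplicities.

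Second, away from $\Delta$ the map $\Phi$ is a stratified locally trivial fibration by Thom's first isotopy lemma, so the preimage of any line avoiding the singular points of $\Delta$ is determined up to the attachments that occur when the line crosses a branch of $\Delta$. I would connect the line $\{u = \eta\}$ to the line $\{v = \delta\}$ through an admissible family of lines and analyse each crossing: its local model is a stratified Morse singularity, and the passage glues in the associated stratified Morse data, which is a cone over the local Milnor fibre of that singularity --- precisely a thimble. Because we move towards the more special configuration, the Lefschetz-type positivity of complex Morse theory guarantees that these passages only attach cells and never cancel them, so that the sweep presents $F$ as $L$ with a finite collection of thimbles attached.

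Third, and this is where the Carrousel of L\^e is indispensable, I would pin down the attaching maps. The Carrousel provides a canonical decomposition of a small disk transverse to the sweep, adapted to the way the branches of $\Delta$ spiral into the origin, together with a distinguished system of vanishing paths along which the cycles are transported. Carrying each attaching sphere back into the reference copy of $L$ along these paths, one reads off that its image bounds, hence is null-homotopic; this is exactly the assertion that the attachment retracts within $L$ to a point.

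The step I expect to be the genuine obstacle is this last one: proving the null-homotopy of the attaching maps \emph{inside} the complex link. In the classical situation of a hypersurface in a smooth ambient space this is L\^e's theorem, where the carrousel monodromy can be made explicit; in the present Whitney-stratified generality one must argue entirely with stratified Morse data and verify that the transported vanishing cycles do not link essentially with the topology already carried by $L$. Keeping rigorous track of the boundary behaviour along $\partial B$ --- which is precisely why one works with closed Milnor balls and Thom's first isotopy lemma --- is the delicate technical underpinning on which the whole comparison rests.
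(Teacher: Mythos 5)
Your first two moves reproduce the skeleton of Tib{\u a}r's argument as the paper outlines it: the map $\Phi=(h,f)$ on $Z\cap B$, the relative polar curve $\Gamma(h,f)$ with Cerf diagram $\Delta=\Phi(\Gamma)$, and the identifications $F\cong\Phi^{-1}(D\times\{\delta\})$ and $L\cong\Phi^{-1}(\{\eta\}\times D')$. But the decisive assertion of the theorem --- that each attaching map retracts \emph{within} $L$ to a point --- is exactly the step you leave unproved, and the sketch you offer for it is not an argument: ``its image bounds, hence is null-homotopic'' is a non sequitur (a cycle may bound homologically without being null-homotopic), and nothing in your carrousel paragraph identifies \emph{where} on $L$ the attachments land. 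The mechanism in the paper's outline is concrete and quite different: one first shows that, up to homotopy, $F$ is obtained from the single fiber $F'=\Phi^{-1}(\{(\eta,\delta)\})$ by attaching thimbles along paths in the fixed horizontal disc $D\times\{\delta\}$ to the points of $\Delta\cap (D\times\{\delta\})$, using that $h$ restricted to $\Phi^{-1}(D\times\{\delta\})$ has only stratified Morse singularities there; one then constructs a homeomorphism $L\cong\Phi^{-1}(W)\subset F$ by ``sliding along $\Delta$'', with $W$ chosen so that $F'\subset\Phi^{-1}(W)$; finally the carrousel monodromy shows that every thimble $e$ still needed to complete $\Phi^{-1}(W)$ to $F$ attaches at the same spot as a twin thimble $e'$ already attached to $F'$ inside $\Phi^{-1}(W)$, so the attaching map of $e$ factors through the cone $e'$ and contracts inside $L$. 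Without this repetition-of-thimbles argument you obtain at best that $F$ is built from $L$ by some cell attachments, which is far weaker and would not support the bouquet decomposition the paper extracts from the theorem.

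A secondary problem lies in your second step. Sweeping a family of lines from $\{u=\eta\}$ to $\{v=\delta\}$ does not produce isolated ``stratified Morse crossings'': since all branches of $\Delta$ pass through the origin, the intersection points of the moving line with $\Delta$ collide, appear at tangencies of the line with $\Delta$, and pass through $\partial(D\times D')$, and none of these degenerations has a stratified Morse local model of the kind your gluing claim requires; likewise the invoked ``Lefschetz-type positivity'' is a statement about a generic function on a fixed fiber, not about a family of plane sections of the discriminant. Tib{\u a}r's proof avoids moving the line altogether: the disc $D\times\{\delta\}$ stays fixed, the Morse theory is that of $h$ on the fixed fiber $\Phi^{-1}(D\times\{\delta\})$, and the entire comparison between $F$ and $L$ is routed through $F'$ and the carrousel as described above. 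Replacing your sweep by this fixed-disc argument, and supplying the $L\cong\Phi^{-1}(W)$ identification with the twin-thimble count, is what would turn your outline into the actual proof.
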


We give a rough outline of the idea of the proof. 
We may assume $(Z,0) \subset (\CC^N,0)$ to be embedded in some smooth ambient 
space. 
Let $h$ be the linear equation on $\CC^N$ defining the link $L$ of $(Z,0)$ and consider 
\begin{equation}
  \Phi = (h,f) : B \cap Z \cap \Phi^{-1}( D \times D' ) \to D \times D'
  \label{eqn:LeFibration}
\end{equation}
for a sufficiently small, closed ball $B$ and discs $D, D' \subset \CC$ around 
the origin. In \cite{Le77}, L\^e has shown the following. 
There exists a Zariski open set $\Omega \subset \left( \CC^N \right)^\vee$ 
of linear forms on the ambient space such that for $h \in \Omega$
the polar variety 
\[
  \Gamma(h,f) := 
  \overline{\{ z \in Z\setminus f^{-1}(\{0\}) : 
    \exists a \in \CC: \D h(z)|_{T_z Z} = a \cdot \D f(z)|_{T_z Z} \} },
\]
i.e. the critical locus of $h$ on $Z$ relative to $f$,
is a curve, which is branched over its image 
\[
  \Delta = \Delta(h,f) = \Phi(\Gamma(h,f)) \subset D \times D', 
\]
the so-called Cerf-diagram.
The proof for the set $\Omega$ of admissable hyperplane equations to 
be Zariski open can be found in \cite{HammLe73}. 
Moreover, one can choose $D'$ small enough such that the intersection 
$\Delta \cap \partial D \times D'$ is empty. 
Then $\Phi$ is a topological fibration away from $\Delta$ and 
one has homeomorphisms 
\[
  F \cong \Phi^{-1}( D \times \{\delta\}) 
\]
and 
\[
  L \cong \Phi^{-1}( \{\eta\}\times D' )
\]
for $0 \neq \delta$, resp. $0 \neq \eta$, sufficiently small. 
It is also shown in \cite{HammLe73} that $\Omega$ can be chosen such that the restriction 
of $h\in \Omega$ to 
any fixed fiber $\Phi^{-1}(D \times \{\delta\})$
has only Morse singularities over the intersection points 
$\Delta \cap D \times \{\delta\}$ 
for $0 \neq \delta \in D'$.

At this point the so-called ``Carrousel'' is furnished by the 
geometric monodromy of $F$ along the boundary of $D'$, i.e. by 
the variation of the value $\delta$ of $f$, but one does not 
only construct a lifting of the unit tangent vector field 
along $\partial D'$ to 
$\Phi^{-1}(D \times \partial D')$, but one also keeps track 
of the monodromy induced on the disc 
$D \times \{\delta\}$, the intersection points 
$C = \Delta(h,f) \cap D \times \{\delta\}$, and the corresponding 
critical points of $h$ on the Milnor fiber $\Phi^{-1}(D\times\{\delta\})$ 
over them. 

Let $F' = \Phi^{-1}(\{(\eta,\delta)\})$. Then up to homotopy 
the Milnor fiber $F$ is obtained from $F'$ by attaching  
thimbles along suitably chosen paths in $D \times \{\delta\}$ 
from $(\eta,\delta)$ 
to the critical values of the stratified Morse points of $h$ on $F$. 
The topology of each of 
these attachments is governed by the Morse data. In the situations 
we will encounter in the context of EIDS, the Morse data 
will always be of the following 
form:

\begin{proposition}
  \label{prp:ThimbleForDeterminantalSingularities}
  Let $(X,p) \cong (M_{m,n}^s,0) \times (\CC^k,0)$ and $h : (X,p)\to (\CC,0)$ a 
  holomorphic map germ with a stratified Morse singularity at $p$. Then the 
  thimble corresponding to this critical point is 
  \[
    (C(S^k(L_{m,n}^{s,1})),S^k(L_{m,n}^{s,1}) ),
  \]
  i.e. one attaches the real cone $C(S^{k}(L_{m,n}^{s,1}))$ along its boundary 
  $S^k(L_{m,n}^{s,1})$. 
\end{proposition}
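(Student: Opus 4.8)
The plan is to apply the Main Theorem of stratified Morse theory from \cite{GoreskyMacPherson88}, which identifies the local Morse data at a stratified Morse critical point with the product of its tangential and normal Morse data; by the footnote convention this product is exactly the thimble. The first step is to locate the stratum through $p$. In the product $(X,p)\cong (M_{m,n}^s,0)\times(\CC^k,0)$ the origin of $M_{m,n}^s$ lies in its deepest rank stratum $S_{m,n}^1=\{0\}$, so $p$ sits on the smooth stratum $\{0\}\times\CC^k$ of complex dimension $k$.

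Next I would compute the two pieces separately. For the \emph{normal} Morse data, the product structure shows that a normal slice to $\{0\}\times\CC^k$ in $X$ is a normal slice to the origin in $M_{m,n}^s$; since the normal data is independent of the chosen Morse function (\cite{GoreskyMacPherson88}), it therefore coincides with the normal Morse data of $M_{m,n}^s$ along its deepest stratum, which by (\ref{eqn:NormalMorseDataGenericDeterminantalVariety}) is the cone pair $(C(L),L)$ with $L:=L_{m,n}^{s,1}$. For the \emph{tangential} Morse data, the restriction $h|_{\{0\}\times\CC^k}$ is a holomorphic function on $\CC^k$ with a nondegenerate critical point at $p$; by the complex Morse lemma it is right-equivalent to $z_1^2+\dots+z_k^2$, whose real part has Morse index $k$. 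Classical Morse theory then gives that the tangential Morse data is homotopy equivalent, as a pair, to the Lefschetz thimble $(D^k,S^{k-1})=(C(S^{k-1}),S^{k-1})$.

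It remains to multiply the two pairs and recognise the result. Writing $D^k=C(S^{k-1})$, the product of pairs is
\[
  (C(S^{k-1}),S^{k-1})\times(C(L),L)=\bigl(C(S^{k-1})\times C(L),\; C(S^{k-1})\times L\cup S^{k-1}\times C(L)\bigr).
\]
Here I would invoke the standard homeomorphism $C(A)\times C(B)\cong C(A\ast B)$ between a product of cones and the cone over the join, under which the union of the two outer faces $C(A)\times B\cup A\times C(B)$ is carried onto the base $A\ast B$. With $A=S^{k-1}$ and $B=L$ this turns the total space into $C(S^{k-1}\ast L)$ and the attaching locus into $S^{k-1}\ast L$. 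Finally, $S^0\ast X\cong S^1(X)$ together with associativity of the join and $S^{k-1}\cong\underbrace{S^0\ast\dots\ast S^0}_{k}$ yields $S^{k-1}\ast L\cong S^k(L)$, so the thimble is $(C(S^k(L_{m,n}^{s,1})),S^k(L_{m,n}^{s,1}))$ as claimed. The conventions $S^0(X)=X$ and $S^1(\emptyset)=S^0$ absorb the degenerate cases: for $k=0$ only the normal data survives, and for $s=1$, where $L=\emptyset$ since the origin is a smooth point of $M_{m,n}^1=\{0\}$, one recovers the ordinary smooth Lefschetz thimble $(D^k,S^{k-1})$.

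I expect the main obstacle to be the pair-level bookkeeping in the last step: verifying that $C(A)\times C(B)\cong C(A\ast B)$ really sends the prescribed outer faces precisely onto the base of the cone, and that replacing the honest thickened tangential Morse data by the reduced pair $(D^k,S^{k-1})$ is compatible with taking the product against the normal cone pair. Since all the spaces involved are cones over compact stratified sets and the relevant inclusions are cofibrations, these homotopy equivalences of pairs survive the product, so working up to homotopy — all that the bouquet decomposition $\cong_{ht}$ of Theorem \ref{thm:MainTheorem} requires — removes the only genuine difficulty.
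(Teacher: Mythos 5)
Your proof is correct, and it is exactly the argument the paper implicitly relies on: the paper states this proposition without proof, taking it as standard stratified Morse theory, namely local Morse data $=$ tangential $\times$ normal Morse data from \cite{GoreskyMacPherson88}, the normal data $(C(L_{m,n}^{s,1}),L_{m,n}^{s,1})$ read off from the product structure as in (\ref{eqn:NormalMorseDataGenericDeterminantalVariety}), the tangential data $(D^k,S^{k-1})$ for a holomorphic Morse point on the $k$-dimensional stratum $\{0\}\times\CC^k$, and the identifications $C(A)\times C(B)\cong C(A\ast B)$ and $S^{k-1}\ast L\cong S^k(L)$. Your handling of the degenerate cases via the conventions $S^0(X)=X$ and $S^1(\emptyset)=S^0$ also matches the paper's stated conventions, so nothing is missing.
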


The key observation from the Carrousel is that keeping track of the relative 
critical points of the hyperplane equation $h$ on $F$ allows one to determine 
exactly at which locus on $F'$ these attachments take place. 

As a final step, one constructs another homeomorphism 
$L \cong \Phi^{-1}(W) \subset F$ on a certain subspace $\Phi^{-1}(W)$ of $F$ 
by ``sliding along $\Delta$''. The space $W$ is chosen such that 
$F' \subset \Phi^{-1}(W)$ and one can use the carrousel monodromy to show that 
for each thimble $e$ one has to attach to $\Phi^{-1}(W)$ to complete it 
- up to homotopy - to $F$, there is already one thimble $e'$ that had been attached 
to $F'$ in the same spot as $e$ to get $\Phi^{-1}(W)$. This explains, why each 
attaching map in the statement of the Handlebody Theorem \ref{thm:HandlebodyTheorem} 
retracts within $L$ to a point.

\section{Proof of the Main Theorem}

\subsection{The Standard Transformation}
\label{sec:TheStandardTransformation}

Let 
$(X_0,0) \subset (\CC^N,0)$ be an EIDS of type $(m,n,t)$ given by 
a matrix $A$. 
In this section we will explain how to transform $(X_0,0)$ and its 
determinantal deformations into an ICIS on a canonical ambient 
space $(Z,0) \cong (M_{m,n}^t,0) \times (\CC^N,0)$. 

Let $Y = \Mat(m,n;\CC) \cong \CC^{m\cdot n}$, 
\[
  \mathbf Y =
  \begin{pmatrix}
    y_{1,1} & \cdots & y_{1,n} \\
    \vdots & & \vdots \\
    y_{m,1} & \cdots & y_{m,n} 
  \end{pmatrix}
\]
the tautological matrix with the standard coordinates $y_{i,j}$ as entries, 
$\CC[\underline y]$ the associated affine coordinate ring, 
and $\OO_{m\cdot n} = \CC\{\underline y \}$ the local ring of $(Y,0)$. 

Let $A : U \to Y$ be a representative of the matrix $A$ defining $(X_0,0)$.
We set 
\[
  Z := 
  \left\{ (x,y) \in U \times Y : \left(A(x) - \mathbf Y(y)\right)^{\wedge t} = 0  \right\}.
\]
This space comes along with a commutative diagram and two natural projections 
\begin{equation}
  \xymatrix{ 
    X_y \ar@{^{(}->}[r] \ar[d] &
    Z \ar[d]_p \ar[r]^q & U \\
    \{y\} \ar@{^{(}->}[r] & Y & \\
  }.
  \label{eqn:ProjectionsFromZ}
\end{equation}
For any point $y \in Y$ we can view the fiber 
\[
  X_y = \left( A - \mathbf Y(y) \right)^{-1}(M_{m,n}^t) = q\left( p^{-1}(\{y\}) \right)
\]
as a determinantal deformation of the EIDS $(X_0,0)$.

It follows from the obvious change of coordinates
\begin{equation}
  \tilde x_k = x_k, \quad 
  \tilde y_{i,j} = y_{i,j} - a_{i,j}(x)
  \label{eqn:CoordinateChangeAmbientSpace}
\end{equation}
on the ambient space 
$(\CC^N,0) \times (Y,0)$ that we have an isomorphism
\begin{equation}
  (Z,0) \cong (M_{m,n}^t,0) \times (\CC^N,0).
  \label{eqn:ProductDecompositionForZ}
\end{equation}
In particular, $(Z,0)$ enjoys a canonical Whitney stratification by the strata
\[
  (\tilde S_{m,n}^s,0) = (S_{m,n}^s,0) \times (\CC^N,0)
\]
inherited 
from the rank stratification on $M_{m,n}^t$.

%Let 
%\begin{eqnarray*}
  %\Gamma_A(U) &=&  \{ (x,A(x)) \in U \times Y : x \in U\} \\
  %&=& \{ A(x) - \mathbf Y(y) = 0 \} \\
  %&=& \{ \tilde y = 0 \} 
%\end{eqnarray*}
%be the graph of $A$. In the new coordinates $(\tilde x, \tilde y)$ defined 
%in (\ref{eqn:CoordinateChangeAmbientSpace}), the projection $p$ to $Y$ becomes 
%\[
  %y_{i,j}( p(\tilde x, \tilde y)) = \tilde y_{i,j}+ a_{i,j}(\tilde x).
%\]

\begin{proposition}
  The projection $q : (Z,0) \to (Y,0)$ is a complete intersection morphism 
  with isolated singularity on $(Z,0)$ relative to its canonical 
  Whitney stratification. In this sense 
  \[
    \xymatrix{
      (X_0,0) \ar[d] \ar@{^{(}->}[r] &
      (Z,0) \ar[d]^p \\
      \{0\} \ar@{^{(}->}[r] & 
      (Y,0)
    }
  \]
  is an ICIS on $(Z,0)$.
\end{proposition}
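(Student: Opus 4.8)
The plan is to verify the two assertions separately: that the projection $p\colon(Z,0)\to(Y,0)$ onto the $Y$-factor (the vertical map in the diagram) is a complete intersection morphism, and that it has an isolated singularity relative to the canonical Whitney stratification. Throughout I would work in the coordinates \eqref{eqn:CoordinateChangeAmbientSpace}, in which the isomorphism \eqref{eqn:ProductDecompositionForZ} becomes the identity and $p$ is transformed into the \emph{twisted projection}
\[
  \tilde p\colon (M_{m,n}^t,0)\times(\CC^N,0)\to (Y,0),\qquad \tilde p(M,x)=M+A(x),
\]
with $M$ ranging over $M_{m,n}^t$. It is precisely this twist by $A$ that carries the transversality hypothesis defining the EIDS into a statement about $p$.

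For the complete intersection property I would first recall that the generic determinantal variety $M_{m,n}^t$ is classically Cohen--Macaulay, so that $Z\cong M_{m,n}^t\times\CC^N$ is Cohen--Macaulay as well. The fiber $X_0=p^{-1}(0)$ is cut out in $Z$ by the $m\cdot n$ pullbacks of the coordinate functions $y_{i,j}$, while the defining property of an EIDS gives $\dim_0 X_0 = N-(m-t+1)(n-t+1)=\dim_0 Z - m\cdot n$. In a Cohen--Macaulay local ring a system of $m\cdot n$ elements cutting out a subgerm of codimension $m\cdot n$ is automatically a regular sequence; hence the $y_{i,j}$ form a regular sequence in $\OO_{Z,0}$, the map $p$ is flat at the origin, and it realizes $X_0$ as a complete intersection inside $Z$. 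This is the complete intersection morphism property.

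For the isolated singularity I would restrict $\tilde p$ to a stratum $\tilde S_{m,n}^s=S_{m,n}^s\times\CC^N$ and compute its differential at a point $(M,x)$:
\[
  \D(\tilde p|_{\tilde S_{m,n}^s})_{(M,x)}(v,w)=v+\D A(x)\,w,\qquad (v,w)\in T_M S_{m,n}^s\times\CC^N,
\]
so that $\tilde p|_{\tilde S_{m,n}^s}$ is submersive at $(M,x)$ exactly when $T_M S_{m,n}^s+\D A(x)(\CC^N)=\Mat(m,n;\CC)$. On the central fiber $\tilde p^{-1}(0)$ one has $M=-A(x)$, and since the rank strata $S_{m,n}^s$ are conical their tangent spaces satisfy $T_{-A(x)}S_{m,n}^s=T_{A(x)}S_{m,n}^s$; the submersivity condition therefore becomes $T_{A(x)}S_{m,n}^s+\D A(x)(\CC^N)=\Mat(m,n;\CC)$, which is exactly transversality of $A$ to the stratum $S_{m,n}^s$ at the point $x$. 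As $(M,x)\in \tilde p^{-1}(0)$ with $x\neq 0$ ranges over a punctured neighborhood, the EIDS hypothesis that $A$ be transverse to the rank stratification off the origin supplies precisely this condition, whereas $x=0$ forces $(M,x)=(0,0)$. Thus the stratified critical locus of $p$ meets $p^{-1}(0)$ only at the origin, which is the desired isolated singularity, and $(X_0,0)\subset(Z,0)$ is an ICIS.

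The main obstacle is not computational but definitional: one must pin down what ``complete intersection morphism with isolated singularity relative to the stratification'' is meant to mean on the singular ambient germ $(Z,0)$, and check that it reduces to the two verifiable conditions above. The only genuinely geometric input is the identification of $p$-submersivity along the strata with the transversality of $A$, for which the twisted form $\tilde p(M,x)=M+A(x)$ and the conicality of the rank strata are essential; everything else is dimension bookkeeping together with the Cohen--Macaulayness of $M_{m,n}^t$.
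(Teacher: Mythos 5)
Your proposal is correct and takes essentially the same route as the paper: the complete intersection property follows from the dimension count together with Cohen--Macaulayness of $(Z,0)\cong(M_{m,n}^t,0)\times(\CC^N,0)$ (you merely spell out the regular-sequence argument the paper leaves implicit), and the isolated singularity is obtained by identifying stratified submersivity of $p$ along the strata $S_{m,n}^s\times\CC^N$ on $X_0\setminus\{0\}$ with the EIDS transversality hypothesis on $A$. The only difference is presentational: you compute the differential of the twisted projection $(M,x)\mapsto M+A(x)$ directly in the product coordinates, using conicality of the rank strata to replace $T_{-A(x)}S_{m,n}^s$ by $T_{A(x)}S_{m,n}^s$, whereas the paper performs the equivalent computation with local defining equations $h$ for the stratum and kernels of Jacobians.
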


\begin{proof}
  Counting dimensions yields
  \begin{eqnarray*}
    & & \dim (Z,0) - \dim (X_0,0) \\
    &=&  (m+n)(t-1) - (t-1)^2 + N - (N - (m-t+1)(n-t+1)) \\
    &=& m\cdot n \\
    &=& \dim (Y,0).
  \end{eqnarray*}
  The space $(Z,0) \cong (M_{m,n}^t,0) \times (\CC^N,0)$ is known to be Cohen-Macaulay 
  and hence $p$ must define a complete intersection.

  As for the isolated singularity: Choose a representative 
  $A : U \subset \CC^N \to \Mat(m,n;\CC)$ on some open set $U$ with $A$ transverse 
  to the rank stratification in $\Mat(m,n;\CC)$ on $U \setminus \{0\}$. 
  For $y \in Y$ we set 
  \[
    A_y = A - \mathbf Y(y)\colon U \to Y.
  \]
  Let $x\in X_y = A_y^{-1}(M_{m,n}^t)$ be an arbitrary point, 
  $\Sigma^s = A_y^{-1}(S_{m,n}^s)$ 
  the stratum of $X_y$ containing $x$, and $Q = A_y(x) \in S_{m,n}^s$
  its image.
  Since $S_{m,n}^s$ is smooth of codimension 
  $c(s) = (m-s+1)(n-s+1) \leq N$, we may choose a holomorphic function
  \[ 
    h \colon (\Mat(m,n;\CC),Q) \to (\CC^{c(s)},0)
  \]
  such that $(S_{m,n}^s,Q) = (h^{-1}(\{0\}),Q)$. The map $A_y$ being transverse 
  to $S_{m,n}^s$ at $x$ is equivalent to 
  \begin{equation}
    \rank \D (h\circ A_y)(x) = c(s).
    \label{eqn:TransversalityConditionAsRank}
  \end{equation}
  On the other hand, the equations for the stratum $\tilde S_{m,n}^s$ of $Z$ 
  at the point $z=(x,y) \in U \times Y$ are 
  \[
    (\tilde S_{m,n}^s,z) = \{ h( \mathbf Y(y) - A(x) ) = 0 \}.
  \]
  The tangent space to $\tilde S_{m,n}^s$ at $z$ is the kernel of the jacobian
  \[
    T_z \tilde S_{m,n}^s = \ker \D (h\circ (\mathbf Y - A))(z) 
    = \ker \left( \frac{\partial h}{\partial \mathbf Y} \D \mathbf Y 
    - \frac{\partial h}{\partial \mathbf Y} \frac{\partial A}{\partial x} \D x \right).
  \]
  Since 
  \[
    \D \left( h\circ A_y\right)(x) = 
    \frac{\partial h}{\partial \mathbf Y} \frac{\partial A_y}{\partial x} \D x 
    = \frac{\partial h}{\partial \mathbf Y} \frac{\partial A}{\partial x} \D x,
  \]
  it is easy to see that (\ref{eqn:TransversalityConditionAsRank}) is 
  equivalent to $p : (x,y) \mapsto y$ being a submersion on $\tilde S_{m,n}^s$, 
  and hence on $Z$, at $(x,y)$.
  In particular, this holds for all points on $X_0 \setminus \{0\} \subset Z$ 
  and therefore $p$ defines an ICIS.
\end{proof}

We will fix some notation. Let 
\[
  \mathbf W = \left( \{0\} = W_0 \subsetneq W_1 \subsetneq W_2 \subsetneq \cdots 
  \subsetneq W_{m\cdot n-1} \subsetneq W_{m\cdot n} = \CC^{m\cdot n}\right)
\]
be a maximal ascending flag in $Y= \Mat(m,n;\CC)$ and 
\[
  \mathbf V = \left( \Mat(m,n;\CC)\times \CC^N = V_0 \supsetneq V_1 \supsetneq \cdots 
  \supsetneq V_{m\cdot n -1} \supsetneq V_{m\cdot n} \right)
\]
a descending flag in $\Mat(m,n;\CC) \times \CC^N$ with $\dim V_i/V_{i+1} = 1$ 
for each $i$.

For each $k>0$ we set 
\begin{equation}
  Z_k := Z \cap p^{-1}(W_k) \cap V_{k-1}
  \label{eqn:DefinitionZk}
\end{equation}
The two projections from $Z$ induce natural maps 
\begin{equation}
  \xymatrix{ 
    & Z_k \ar[dl]_{f_k} \ar[dr]^{h_k} & \\
    W_k / W_{k-1} & & V_{k}/V_{k+1}
  }
  \label{eqn:DefinitionFkAndHk}
\end{equation}

\begin{proposition}
  \label{prp:ChoiceOfFlags}
  If the flags $\mathbf W$ and $\mathbf V$ are in general position, 
  then the following holds. 
  \begin{enumerate}
    \item Each of the spaces $Z_k$ inherits the canonical Whitney stratification 
      from $(Z,0)$ outside the origin. In particular, $(Z_k,0)$ is a space with isolated 
      singularity if $N < (m-t+1)(n-t+1)$.
    \item Each $f_k$ defines an isolated hypersurface singularity on 
      $(Z_k,0)$ relative to the given stratification. 
    \item The function $h_k$ is a linear equation on $(Z_k,0)$, which 
      can be used to define the complex link and the Carrousel.
  \end{enumerate}
\end{proposition}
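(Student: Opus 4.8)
The plan is to realize each $Z_k$ as a linear section of $Z$ and to reduce all three assertions to transversality statements that follow from general position of the flags together with the fact, established in the preceding proposition, that the projection $p\colon Z \to Y$ restricts to a submersion on every stratum $\tilde S_{m,n}^s$ of $Z$ away from the origin (this is precisely the ICIS property of $p$). First I would record that $p^{-1}(W_k) = \CC^N \times W_k$ and $V_{k-1}$ are both linear subspaces of the ambient space $\CC^N \times Y$, so that $Z_k = Z \cap L_k$ with $L_k := p^{-1}(W_k) \cap V_{k-1}$ a linear subspace of codimension $(m\cdot n - k) + (k-1) = m\cdot n - 1$ as soon as the flags are in general position. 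A dimension count then gives $\dim\bigl(\tilde S_{m,n}^s \cap Z_k\bigr) = N - (m-s+1)(n-s+1) + 1$ for each $s$, a bookkeeping fact I use repeatedly.

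For assertion (1), since $p$ is a stratified submersion on $Z \setminus \{0\}$, the preimage $p^{-1}(W_k)$ automatically meets every stratum $\tilde S_{m,n}^s$ transversally off the origin, with no genericity of $\mathbf W$ required: the preimage of a linear subspace under a submersion is always transverse. By the transversality criterion recalled in the preliminaries, $Z \cap p^{-1}(W_k)$ therefore inherits the Whitney stratification of $Z$ outside the origin, with strata $\tilde S_{m,n}^s \cap p^{-1}(W_k)$ smooth there. Now I would invoke general position of $\mathbf V$: a generic $V_{k-1}$ is transverse to these finitely many smooth strata away from the origin by standard transversality of generic linear sections, and transverse linear slicing preserves the Whitney conditions. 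Hence $Z_k$ inherits the stratification off the origin. The isolated-singularity clause is then the dimension count above: when $N < (m-t+1)(n-t+1)$ every number $N - (m-s+1)(n-s+1)+1$ with $s \le t$ is $\le 0$, so all strata collapse to the origin and $(Z_k,0)$ has isolated singularity.

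For (2) and (3) I would use the explicit descriptions $f_k = \ell_k \circ p$, where $\ell_k\colon W_k \to W_k/W_{k-1} \cong \CC$ is the quotient map, and $h_k$ the linear form on $V_{k-1}$ cutting out $V_k$. On each stratum $\tilde S_{m,n}^s \cap p^{-1}(W_k)$ the submersion $p|_{\tilde S_{m,n}^s}$ restricts to a submersion onto $W_k$, so $f_k = \ell_k \circ p$ is itself a submersion off the origin there; in particular $f_k$ has no stratified critical point on $Z \cap p^{-1}(W_k)$ away from the origin. Slicing by the generic $V_{k-1}$, the stratified critical locus of $f_k$ on the strata $\tilde S_{m,n}^s \cap Z_k$ is a relative polar locus of the slice, and the content of (2) is that for general position of $\mathbf V$ this locus meets $Z_k$ only at the origin, giving an isolated hypersurface singularity. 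For (3), $h_k$ is linear by construction, and I would argue that a generic flag places $h_k$ inside the Zariski-open set $\Omega$ of admissible linear forms of L\^e and Hamm--L\^e, so that the polar curve $\Gamma(h_k, f_k)$, the Cerf diagram, and the Morse property of $h_k$ on the fibers all hold, which is exactly what is needed to set up the complex link and the Carrousel.

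The main obstacle is the genericity heart of (2), together with its compatibility with (3): one must show that after the mixed cut by $V_{k-1}$, which couples the $\CC^N$ and $Y$ directions, the stratified critical locus of $f_k$ on $Z_k$ is concentrated at the origin, while at the same time $h_k$ is generic relative to $f_k$. I expect to handle this by a parametrized transversality argument over the flag variety, feeding in the EIDS transversality of $A$ (equivalently the stratified submersivity of $p$) to force the polar loci to have the expected dimension, and then choosing $\mathbf W$ and $\mathbf V$ in general position with one another so that the admissibility conditions of Hamm--L\^e for the pair $(h_k,f_k)$ are satisfied on a Zariski-open set of flags.
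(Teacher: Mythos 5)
Your proposal rests on a false strengthening of the preceding proposition: you claim that the ICIS property of $p$ means $p$ restricts to a submersion on every stratum $\tilde S_{m,n}^s$ of $Z$ \emph{away from the origin}. What the paper actually proves is weaker and this is essential: submersivity of $p$ on the stratum through a point $(x,y)$ is equivalent to transversality of $A_y = A - \mathbf Y(y)$ to the rank stratification at $x$, and the EIDS hypothesis grants this only for $y=0$, i.e.\ at points of $X_0\setminus\{0\} = p^{-1}(0)\setminus\{0\}$. For $y\neq 0$ transversality of $A_y$ can and does fail; those failures are exactly the stratified critical points of $p$ whose traces on $Z_k$ become the Morse points of $f_k$ producing the thimbles counted by $r(s)$ in Theorem \ref{thm:MainTheorem}. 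If your submersivity claim were true, $f_k$ would have no stratified critical points off the origin at all, the Handlebody attachments would be empty, and the determinantal Milnor fiber would carry no vanishing cycles --- contradicting the entire content of the paper. Consequently your conclusion in (1) that ``no genericity of $\mathbf W$ is required'' is wrong, and the first half of your (2) (``$f_k$ has no stratified critical point on $Z\cap p^{-1}(W_k)$ away from the origin'') fails for the same reason: these statements hold only on the zero fiber $\{f_k=0\}$, not on all of $Z_k$.

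The genericity of $\mathbf W$ that you dismiss is precisely what the paper's proof supplies, and it does so where your argument has its declared gap. The paper proceeds by an \emph{interleaved} induction: it chooses $[W_k/W_{k-1}]$ to be a regular value (Sard's theorem) of the projectivized projection $\PP \tilde p$ on the part of $Z$ cut by the $V$-flag and lying off $p^{-1}(W_{k-1})$ --- this controls all points of $Z_k$ off the zero fiber of $f_k$ --- while on the zero fiber it argues that non-submersivity of $\tilde p$ would force non-submersivity of $p = (\tilde p, f_k)$, contradicting the ICIS property; then $h_k$ (hence $V_k$) is taken in Tib{\u a}r's Zariski-open set of Carrousel-admissible linear forms \emph{for the already constructed} $f_k$ on $Z_k$, and only then is $W_{k+1}$ chosen. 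So ``general position of the flags'' in the statement means this nested, mutually dependent choice, not an a priori independent genericity of $\mathbf W$ and $\mathbf V$. Your treatment of (3) is essentially the paper's (Zariski-openness from \cite{Tibar95}, \cite{HammLe73}), but your crucial step for (2) --- that the stratified critical locus of $f_k$ on $Z_k$ meets the zero fiber only at $0$ --- is deferred to an unexecuted ``parametrized transversality argument over the flag variety,'' i.e.\ the heart of the proposition is asserted as a plan rather than proved. As it stands the proposal does not constitute a proof, and its first two assertions rely on a premise that is demonstrably false in this setting.
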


\begin{proof}
  We do induction on $k$. Let $k=1$. By definition 
  $V_{k-1} = V_0 = \Mat(m,n;\CC) \times \CC^N$. Consider the map 
  \[
    \PP p : Z\setminus X_0 \to \PP^{m\cdot n-1}, \quad 
    (x,y) \mapsto (p_{1,1}(x,y): \cdots : p_{m,n}(x,y)).
  \]
  We may choose a regular value $[W_1] \in \PP^{m\cdot n-1}$ of this map. 
  Then $Z_1 = p^{-1}(W_1)$ does not have singular points outside 
  $X_0 = \{f_1=0 \} \subset Z_1$. 
  Suppose $(x,0) \in X_0$ was a singular point of $Z_1$ on $X_0$, $x \neq 0$,
  and $S$ the stratum of $Z$ containing it. 
  Write $p = (\tilde p, f_1)$ with 
  \[
    \tilde p \colon z \mapsto z + W_1 \in \Mat(m,n;\CC)/W_1 \cong \CC^{m\cdot n-1}.
  \]
  Being a singular point of $Z_1 = \tilde p^{-1}(\{0\})$ the 
  differential $\D \tilde p|_S(x,0)$ does not have full rank. Then also 
  $\D p|_S(x,0)$ can not have full rank - a contradiction to $X_0$ being 
  an ICIS. We conclude that, being the preimage of a stratified 
  submersion off the origin, $Z_1$ inherits the Whitney stratification from 
  $(Z,0)$ and $f_1 : (Z_1,0) \to \CC$ defines an ICIS on $(Z_1,0)$.

  For a given isolated singularity $f_1 \colon (Z_1,0) \to (\CC,0)$ the 
  condition on a linear equation $h_1$ to be sufficiently general to 
  define the Carrousel is Zariski open (\cite{Tibar95}). 
  We may choose $h_1$ accordingly and 
  set $V_1 = \{ h_1 = 0 \}$.

  For the induction step we start by 
  \[
    \PP \tilde p \colon Z \cap V_k \setminus p^{-1}(W_{k-1}) 
    \to \PP( \Mat(m,n;\CC)/W_{k-1}) , \quad
    (x,y) \mapsto [ p(x,y) + W_{k-1} ].
  \]
  Choose a subspace $W_k \subset \Mat(m,n;\CC)$ with $[W_k/W_{k-1}]$ a 
  regular value of this map. The rest of the 
  induction step is merely a repetition of the above said and left to the reader.
\end{proof}

In what follows, we will from now on assume that the flags $\mathbf V$ and 
$\mathbf W$ have been chosen to fulfill Proposition \ref{prp:ChoiceOfFlags}.
For any $k>0$ let 
\begin{equation}
  F_k = f_k^{-1}(\{\delta\}) \cap Z_k \cap B
  \label{eqn:K-thMilnorFiber}
\end{equation}
be the Milnor fiber of $f_k$ on $Z_k$ for a suitable choice of a Milnor 
ball $B$ and $\delta \in \CC \setminus \{0\}$ small enough. 
We denote the complex link of $Z_k$ by 
\begin{equation}
  L_k = h_k^{-1}(\{\eta\}) \cap Z_k \cap B,
  \label{eqn:K-thComplexLink}
\end{equation}
$\eta \in \CC\setminus\{0\}$ small enough. 
\begin{remark}
  \label{rem:StratificationFk}
  Following the proof of Proposition \ref{prp:ChoiceOfFlags}, it is clear that
  each $F_k$ inherits
  the natural stratification from its embedding in $(Z,0)$. In particular, 
  $F_k$ is smooth if $N < (m-t+2)(n-t+2)$ and otherwise there are 
  strata $\Sigma^s$ for each $s$, 
  \[
    \min \{ r \in \NN : (m-r+1)(n-r+1) \leq N \} \leq s \leq t
  \]
  such that the normal slice to $\Sigma^s$ in $F_k$ is isomorphic 
  to $(M_{m-s+1,n-s+1}^{t-s},0)$. The same holds for $L_k$. Note 
  that unlike the formula (\ref{eqn:RangeOfNonemptyStrata}), we now 
  have ``$\leq$'' rather than ``$<$'', since we also allow zero-dimensional 
  strata on $F$.
\end{remark}
This finishes the exhibition of 
the standard transformation and the associated notation.

\subsection{The induction argument}

We can apply the Handlebody Theorem of Tib{\u a}r at each step $k$ 
in the setup of the previous section to obtain our Main Theorem. 
The key lemma for this induction can already be extracted from 
\cite[Corollary 4.2]{Tibar95}:

\begin{lemma}
  In the final setup of the standard transformation we have 
  for each $0<k<m\cdot n$ a (non-canonical) homeomorphism
  \begin{equation}
    L_k \cong F_{k+1}.
    \label{eqn:IdentificationLinkMilnorFiber}
  \end{equation}
  \label{lem:IdentificationLinkMilnorFiber}
\end{lemma}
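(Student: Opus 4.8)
The plan is to compare the two spaces $L_k$ and $F_{k+1}$ by unwinding their definitions in terms of the flags $\mathbf W$ and $\mathbf V$ and exhibiting them as fibers of closely related restrictions of $p$. Recall that
\[
  L_k = h_k^{-1}(\{\eta\}) \cap Z_k \cap B
\]
is cut out inside $Z_k = Z \cap p^{-1}(W_k) \cap V_{k-1}$ by fixing the value $\eta \neq 0$ of the linear form $h_k$, whose zero locus is $V_k$, whereas
\[
  F_{k+1} = f_{k+1}^{-1}(\{\delta\}) \cap Z_{k+1} \cap B
\]
lives inside $Z_{k+1} = Z \cap p^{-1}(W_{k+1}) \cap V_k$, with $f_{k+1}$ recording the component of $p$ in the one-dimensional quotient $W_{k+1}/W_k$. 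The first step is to write both spaces as fibers of the single map
\[
  \Phi_{k} = (h_k, f_{k+1}) \colon Z \cap p^{-1}(W_{k+1}) \cap V_{k-1} \cap B
  \to (V_{k-1}/V_k) \times (W_{k+1}/W_k),
\]
which is exactly the L\^e fibration (\ref{eqn:LeFibration}) attached to the isolated hypersurface singularity $f_{k+1}$ on $Z_{k+1}$ together with the chosen generic hyperplane equation $h_k$. Under this description, $L_k$ is the fiber $\Phi_k^{-1}(\{\eta\} \times \{0\})$ with $\eta$ small and nonzero (the slice $h_k = \eta$ inside the $f_{k+1}=0$ level, which is $Z_k$), while $F_{k+1}$ is the fiber $\Phi_k^{-1}(\{0\} \times \{\delta\})$.

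The heart of the argument is then the observation, which is the content of \cite[Corollary 4.2]{Tibar95}, that in the L\^e fibration the complex link of the total space coincides up to homeomorphism with the Milnor fiber of the defining function once the hyperplane is traded against the function equation. Concretely, the second step is to verify that the Cerf diagram $\Delta(h_k, f_{k+1}) \subset D \times D'$ of the pair does not meet $\partial D \times D'$ after shrinking $D'$, so that $\Phi_k$ is a trivial fibration over $\{\eta\} \times D'$ and over $D \times \{0\}$ in the relevant directions. Sliding the base point $(\eta, 0)$ to $(\eta, \delta)$ and then $(\eta,\delta)$ to $(0,\delta)$ along paths avoiding $\Delta$ produces homeomorphisms
\[
  L_k \cong \Phi_k^{-1}(\{\eta\}\times\{0\})
  \cong \Phi_k^{-1}(\{\eta\}\times\{\delta\})
  \cong \Phi_k^{-1}(\{0\}\times\{\delta\}) \cong F_{k+1},
\]
where the first and last homeomorphisms use that fibers over a common component value are identified by the local triviality away from $\Delta$, and the middle step is the slice identification $L \cong F'$ furnished by the Carrousel. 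What one must check to legitimize this is that all the genericity hypotheses underlying the Handlebody Theorem are in force, and here Proposition \ref{prp:ChoiceOfFlags} delivers exactly what is needed: $f_{k+1}$ is an isolated hypersurface singularity on $(Z_{k+1},0)$ and $h_k$ is a sufficiently general linear equation defining the complex link and Carrousel for it.

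The step I expect to be the main obstacle is the careful bookkeeping of the two flags and of which ambient slice each space sits in, since $L_k$ and $F_{k+1}$ are defined on the \emph{different} intermediate spaces $Z_k$ and $Z_{k+1}$, not on a common one. The subtlety is that passing from $Z_k$ to $Z_{k+1}$ both enlarges the allowed $p$-values (from $W_k$ to $W_{k+1}$) and shrinks the linear slice (from $V_{k-1}$ to $V_k$), so one must confirm that these two moves are precisely compatible with the roles of $h_k$ (whose kernel is $V_k$) and $f_{k+1}$ (the new $W_{k+1}/W_k$-component), which is what makes $\Phi_k$ above well defined with the stated target. A second point requiring care is the compatibility of the Milnor ball $B$ and of the radii $\eta, \delta$ across the two constructions; one invokes the conic structure of $(Z,0)$ and the fact, used throughout the excerpt, that the Milnor ball can be chosen uniformly, so that the same $B$ serves for every $Z_k$ and the homeomorphisms respect boundaries. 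Granting these compatibilities, the identification is a direct application of \cite[Corollary 4.2]{Tibar95}.
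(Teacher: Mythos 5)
Your proof is correct and follows essentially the same route as the paper's: both identify $L_k$ and $F_{k+1}$ as point-fibers of $\Phi = (h_k, f_{k+1})$ on $Z \cap p^{-1}(W_{k+1}) \cap V_{k-1} \cap B$ over $(\eta,0)$ and $(0,\delta)$ respectively, and connect them by parallel transport along paths through the intermediate point $(\eta,\delta)$ avoiding the Cerf diagram, exactly as in \cite[Corollary 4.2]{Tibar95}. The only slips are cosmetic: the triviality invoked for the second transport takes place over a path in $D \times \{\delta\}$ rather than over $D \times \{0\}$, and your target $V_{k-1}/V_k$ for $h_k$ silently corrects the quotient written in diagram (\ref{eqn:DefinitionFkAndHk}).
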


\begin{proof}
  One has homeomorphisms 
  \begin{eqnarray*}
    F_{k+1} &=&  Z_{k+1} \cap f_{k+1}^{-1}(\{\delta\}) \cap B \\
    &=& Z \cap V_{k} \cap p^{-1}(W_{k+1}) \cap f_{k+1}^{-1}(\{\delta\}) \cap B \\
    &=& Z \cap V_{k-1} 
      \cap p^{-1}(W_{k+1}) 
      \cap h_k^{-1}(\{0\}) 
      \cap f_{k+1}^{-1}(\{\delta\}) \cap B\\
    &\cong& Z \cap V_{k-1} 
      \cap p^{-1}(W_{k+1}) 
      \cap h_k^{-1}(\{\eta\}) 
      \cap f_{k+1}^{-1}(\{\delta\}) \cap B\\
    &\cong& Z \cap V_{k-1} 
      \cap p^{-1}(W_{k+1}) 
      \cap h_k^{-1}(\{\eta\}) 
      \cap f_{k+1}^{-1}(\{0\}) \cap B\\
    &\cong& Z \cap V_{k-1} \cap p^{-1}(W_k) \cap h_k^{-1}(\{\eta\}) \cap B \\
    &\cong& L_k
  \end{eqnarray*}
  for a Milnor ball $B$ and sufficiently small values for $\delta$ and $\eta$. 
  The homeomorphisms are induced from the parallel transport in the fibration 
  given by 
  \[
    \Phi = (h_k, f_{k+1}) \colon Z \cap V_{k-1} \cap p^{-1}(W_{k+1}) \cap B
    \to \CC \times \CC
  \]
  as in (\ref{eqn:LeFibration})
  over suitably chosen paths connecting $(0,\delta)$, $(\eta,\delta)$, 
  and $(\eta,0)$. 
\end{proof}

\begin{proof}{(of Theorem \ref{thm:MainTheorem})}
  After applying the standard transformation we obtain for $k=1$: 
  \[
    \overline X_u = f_1^{-1}(\{\delta\} \cap Z_1 \cap B = F_1,
  \]
  because $W_1$ was in general position.
  According to the Handlebody Theorem \cite{Tibar95}, this space has 
  a bouquet decomposition
  \[
    F_1 \cong_{ht} L_1 \vee 
    \bigvee_{s_0 \leq s \leq t} \bigvee_{i=1}^{r_1(s)} 
      S^{N-(m-s+1)(n-s+1)+1}( L_{m-s+1,n-s+1}^{t-s,1} )
  \]
  The terms $S^{N-(m-s+1)(n-s+1)+1}( L_{m-s+1,n-s+1}^{t-s,1} )$ stem from 
  the stratified Morse singularities of $h_1$ on $F_1$. These lay in the 
  strata $\Sigma^s$ of different dimension in  $F_1$, 
  cf. Remark \ref{rem:StratificationFk}.
  Since the normal data along 
  each stratum of $F_1$ is determinantal 
  and $h_1$ has only stratified 
  Morse singularities on $F_1$, we may apply Proposition 
  \ref{prp:ThimbleForDeterminantalSingularities}. According to the Handlebody 
  Theorem \ref{thm:HandlebodyTheorem}, the attaching map for each thimble 
  is homotopy equivalent to a one-point-map. Consequently, the attachment of 
  the cone 
  \[
    C\left( S^{N-(m-s+1)(n-s+1)}(L_{m-s+1,n-s+1}^{t-s,1}) \right)
  \]
  along $L_{m-s+1,n-s+1}^{t-s,1}$
  is homotopy equivalent to taking the wedge sum with the suspension 
  \[
    S^{N-(m-s+1)(n-s+1)+1}(L_{m-s+1,n-s+1}^{t-s,1}).
  \]
  We may now proceed inductively and replace $L_k$ by $F_{k+1}$ in this 
  formula according to Lemma \ref{lem:IdentificationLinkMilnorFiber}.
  At each step we attach a certain number $r_k(s)$ of thimbles and we may add 
  them up to $r(s) = \sum_{k=1}^{m\cdot n-1} r_k(s)$.
  This finishes the proof.
\end{proof}

\begin{corollary}
  \label{cor:ToplogyOfSmoothableEIDS}
  If the singularity $(X_0,0)$ in the setting of Theorem \ref{thm:MainTheorem} 
  is smoothable (i.e. if $N < (m-t+2)(n-t+2)$), then 
  \begin{equation}
    \overline X_u \cong_{ht} L_{m,n}^{t,m\cdot n - N} \vee \bigvee_{i=1}^r S^d,
    \label{eqn:BouquetDecompositionSmoothable}
  \end{equation}
  with $d = N - (m -t +1)(n-t+1) = \dim(X_0,0)$.
\end{corollary}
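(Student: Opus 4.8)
The plan is to deduce the corollary directly from the bouquet decomposition \eqref{eqn:BouquetDecomposition} of Theorem \ref{thm:MainTheorem}, by showing that the smoothability hypothesis forces the outer wedge to contain only the single index $s = t$, and then identifying the surviving suspension term with a sphere of the correct dimension. No new geometric input is needed beyond the Main Theorem itself; the corollary is purely a specialization.

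First I would pin down the value of $s_0 = \min\{s \in \NN : (m-s+1)(n-s+1) \leq N\}$ under the hypothesis. On the relevant range the function $s \mapsto (m-s+1)(n-s+1)$ is strictly decreasing: with $a = m-s$, $b = n-s$ one computes $(a+1)(b+1) - ab = m+n-2s+1 > 0$ for $s \leq t$. Evaluating at $s = t-1$ gives exactly $(m-t+2)(n-t+2)$, so the smoothability condition $N < (m-t+2)(n-t+2)$ says precisely that the defining inequality fails at $s = t-1$, and by monotonicity it then fails for every $s \leq t-1$. On the other hand, since $(X_0,0)$ is an EIDS of type $(m,n,t)$ its codimension $(m-t+1)(n-t+1)$ is at most $N$, so the inequality does hold at $s = t$. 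Hence $s_0 = t$, and the double wedge $\bigvee_{s_0 \leq s \leq t}$ in \eqref{eqn:BouquetDecomposition} collapses to the single summand indexed by $s = t$.

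Next I would evaluate that surviving term. For $s = t$ the inner space is $L_{m-t+1,n-t+1}^{t-t,1} = L_{m-t+1,n-t+1}^{0,1}$, which by definition is a generic linear section of $M_{m-t+1,n-t+1}^0 = \{M : \rank M < 0\} = \emptyset$, hence itself empty. Invoking the suspension conventions recorded after Theorem \ref{thm:MainTheorem}, namely $S^1(\emptyset) = S^0$ and $S^0(X) = X$, the iterated suspension $S^{N-(m-t+1)(n-t+1)+1}(\emptyset)$ equals the sphere $S^{N-(m-t+1)(n-t+1)}$. Writing $d = N - (m-t+1)(n-t+1) = \dim(X_0,0)$, each of the $r(t)$ surviving summands is therefore a copy of $S^d$, and setting $r = r(t)$ reproduces exactly \eqref{eqn:BouquetDecompositionSmoothable}.

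I do not expect a substantial obstacle here: the only points requiring genuine care are the monotonicity argument identifying $s_0 = t$ and the index bookkeeping in the iterated suspension of the empty set, where an off-by-one in the exponent is easy to introduce. As a sanity check, this is consistent with Remark \ref{rem:StratificationFk}, which already records that $F_k$ is smooth precisely when $N < (m-t+2)(n-t+2)$; the regime isolated here is thus exactly the one in which no positive-dimensional strata survive, so that all thimbles are genuine cells and every suspension term is an honest sphere $S^d$.
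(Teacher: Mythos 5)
Your proposal is correct and follows exactly the route the paper intends: the corollary is stated without proof precisely because it is the specialization of Theorem \ref{thm:MainTheorem} you carry out, namely checking via the monotonicity of $s \mapsto (m-s+1)(n-s+1)$ that smoothability forces $s_0 = t$, and then using the conventions $S^1(\emptyset) = S^0$, $S^0(X) = X$ to identify $S^{N-(m-t+1)(n-t+1)+1}\bigl(L_{m-t+1,n-t+1}^{0,1}\bigr) = S^{d+1}(\emptyset) = S^d$. Your bookkeeping (including $(m-t+1)(n-t+1) \leq N$ from the EIDS codimension condition, which guarantees the inequality holds at $s=t$ and that $d \geq 0$) is accurate, so there is nothing to correct.
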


\begin{example}
  Consider determinantal singularities $(X_0,0) \subset (\CC^N,0)$ 
  of type $(2,n,2)$. Let 
  \[
    d = N - (2-2+1)(n-2+1) = N - n +1
  \]
  be the dimension $\dim(X_0,0)$. 
  The inequality characterising smoothability
  becomes 
  \[
    N < (2-2+2)(n - 2 + 2) = 2n.
  \]
  The complex links of the associated generic determinantal 
  variety can easily be computed via the so-called ``Tjurina transform in family'', 
  see \cite{FKZ15}, \cite{Zach16}, or \cite{Zach17}:
  \[
    L_{2,n}^{2,k} \cong_{ht}
    \begin{cases}
      S^2 & \textnormal{ if } 0< k < n \\
      \bigvee_{i=1}^{e-1} S^1 & \textnormal{ if } k=n \\
      e \textnormal{ points} & \textnormal{ if } k = n+1 \\
      \emptyset & \textnormal{ otherwise, }
    \end{cases}
  \]
  where $e$ is the multiplicity of the generic determinantal variety 
  $M_{2,n}^2$. 

  Which one of the complex links is needed, i.e. which $k$ occurs in the 
  formula (\ref{eqn:BouquetDecompositionSmoothable}), depends on $N$: 
  \[
    k = 2n - N.
  \]
  Therefore one has 
  \[
    \overline X_u \cong_{ht} 
    \begin{cases}
      S^2 \vee \bigvee_{i\in I} S^{d} & \textnormal{ if } n < N<2n \\
      \left(\bigvee_{i=1}^{e-1} S^1\right)\vee\left( \bigvee_{i=1}^r S^{1} \right) & 
          \textnormal{ if } N = n \\
	  e' \textnormal{ points} & \textnormal{ if } N = n-1,
    \end{cases}
  \]
  where $e'$ is the multiplicity of $(X_0,0)$.
\end{example}

\begin{remark}
  The decomposition (\ref{eqn:BouquetDecomposition}) in Theorem 
  \ref{thm:MainTheorem} 
  reduces the question about the homotopy type of a determinantal 
  Milnor fiber to the question about the topology of the spaces 
  $L_{m,n}^{t,k}$ apparing in the formula. 
  In those cases, where all these $L_{m,n}^{t,k}$ themselves
  are homotopy equivalent to a bouquet of spheres, the same 
  holds for the determinantal Milnor fiber.

  Moreover, the numbers $r(s)$ measuring the contributions from 
  critical points on the different strata are invariants of 
  the singularity. Theoretically, it is possible to compute these 
  numbers from the Puiseux expansion of the Cerf-diagram 
  $\Delta$ in the Carrousel \cite[section 1.4]{Tibar95} at each induction step in the 
  proof of Theorem \ref{thm:MainTheorem}. However, it would be 
  appealing to have a concise formula relating these numbers 
  to analytic invariants of the singularity. 

\end{remark}

\bibliographystyle{amsplain}
\bibliography{sources}
%\printbibliography

\end{document}